\theoremstyle{definition}
\newtheorem{exmp}{Example}[section]
\newtheorem{rmk}{Remark}[section]
\newtheorem{thm}{Theorem}[section]
\newtheorem{lemma}{Lemma}[section]
\newcommand{\Q}{\mathbb{Q}}
\newcommand{\C}{\mathbb{C}}
\newcommand{\cmark}{\ding{51}}
\newcommand{\xmark}{\ding{55}}
\newcommand{\gxmark}{\textcolor{lightgray}{\ding{55}}}
\theoremstyle{definition}
\newenvironment{breakablealgorithm}
  {
     \refstepcounter{algorithm}
     \hrule height.8pt depth0pt \kern2pt
     \renewcommand{\caption}[2][\relax]{
       {\raggedright\textbf{\ALG@name~\thealgorithm} ##2\par}%
       \ifx\relax##1\relax 
         \addcontentsline{loa}{algorithm}{\protect\numberline{\thealgorithm}##2}%
       \else 
         \addcontentsline{loa}{algorithm}{\protect\numberline{\thealgorithm}##1}%
       \fi
       \kern2pt\hrule\kern2pt
     }
  }{
     \kern2pt\hrule\relax
  }
\title{Orbit and Orbit Closure Containments for Cubic Surfaces}
\author[$\ddag$]{Eunice Sukarto$^{\star}$\\ Supervisors: Ralph Morrison$^{\dagger}$ and Bernd Sturmfels}
\affil[$\star$]{\emph{University of California, Berkeley}, \url{eunicesukarto@berkeley.edu}}
\affil[$\dagger$]{\emph{Williams College}, \url{10rem@williams.edu}}
\affil[$\ddag$]{\emph{MPI-MiS Leipzig and UC Berkeley}, \url{bernd@mis.mpg.de}}
\date{June 19, 2020}
\begin{document}

\maketitle

\begin{abstract}
Given two elements of a vector space acted on by a reductive group, we ask whether they lie in the same orbit, and if not, whether one lies in the orbit closure of the other. We develop techniques to optimize the orbit and orbit closure algorithms and apply these to give a partial classification of orbit closure containments in the case of cubic surfaces with infinitely many singular points, which are known to fall into 13 normal forms. We also discuss the computational obstructions to completing this classification, and discuss tools for future work in this direction.
\end{abstract}

\section{Introduction}
Let $G$ be a reductive group over an algebraically closed field $k$ acting on a $k$-vector space $V$. Throughout most of this paper we will take $G$ to be the general linear group $GL(n,\C)$ and $V$ to be $\mbox{Sym}_d(\C^n)$, the space of homogeneous degree $d$ polynomials in $n$ variables. Given $v\in V$, we look at its orbit $G\cdot v$ under $G$, and the Zariski closure of its orbit $\overline{G\cdot v}$ in $V$.
For each $w\in V$, we may ask the following questions:

\begin{itemize}
    \item[] Is $w\in G\cdot v$?
    \begin{itemize}
        \item[(1)] If yes, find $g$ such that $w = g\cdot v$.
        \item[(2)] If no, give a certificate. We then ask: is $w\in \overline{G\cdot v}$?
        \begin{itemize}
            \item[-] If yes, find a 1-parameter family in $G\cdot v$ which converges to $w$.
            \item[-] If no, find $f\in I(G\cdot v)$ such that $f(w)\neq 0$
        \end{itemize}
    \end{itemize}
\end{itemize}

From now on, unless otherwise stated, the rank of a polynomial will refer to its \emph{Waring rank}.
This is the length of a shortest decomposition as a sum of powers of linear forms, the symmetric rank of the corresponding tensor~\cite{LanTei}.

\begin{exmp}
(Strict containment $G\cdot v \subsetneq \overline{G\cdot v}$). Consider the action of $G = GL(4,\C)$ on $V = \mbox{Sym}_3(\C^4)$ as a change of coordinates $g\cdot v = v\circ g$. The rank 3 polynomial $w = x_1^2x_2$ (see~\cite[Section 10.4]{Lan} or~\cite[Section 8]{LanTei}) lies in the orbit closure of the rank 6 polynomial $v = x_1x_2^2 + x_3x_4^2$ ~\cite[\S 97]{Segre}. By Chevalley's Theorem~\cite[Corollary 4.20]{191},
 the Zariski and Euclidean closures
coincide. Setting $x_1 = x_1$, $x_2 = x_2$, $x_3 = \epsilon x_3$, $x_4 = \epsilon x_4$, we get $v_{\epsilon} = x_1x_2^2 + \epsilon^3x_3x_4^2 \rightarrow w$ as $\epsilon \rightarrow 0$, but $w$ is not in the orbit of $v$ since their ranks do not agree.
This gives an example of when the orbit is not closed.
\end{exmp}

Our paper is organized as follows. We look at standard algorithms for orbits and orbit closures in Section \ref{section:algo} and implement them in Macaulay2 in Section \ref{section:m2-code}. In Section \ref{section:tricks}, we develop some tricks to optimize these Macaulay2 implementations. In Section \ref{section:infinitely-singular}, we give a partial classification of orbit closure containments for cubic surfaces with infinitely many singular points and finally, we discuss computational obstructions to completing this classification and discuss tools for future work in this direction.

Throughout this paper, we will use the notations $(f_1, \ldots, f_n)$, $\left<f_1, \ldots, f_n\right>$, and ideal$(f_1, \ldots, f_n)$ to denote the ideal generated by $f_1, \ldots, f_n$.

\section{The Algorithms}\label{section:algo}
Recall that $G$ is a reductive group over an algebraically closed field $k$ acting on a vector space $V$, and $v,w$ are elements of $V$. In this section, we give standard algorithms for deciding whether $w\in G\cdot v$ and $w\in \overline{G\cdot v}$. 
\subsection{Is $w\in G\cdot v$?}
The following algorithm decides whether $w$ is in the orbit of $v$.

\bigskip
\begin{breakablealgorithm}
  \caption{Orbit containment}
    \KwIn{
    \begin{itemize}
        \item[-] a reductive group $G$ acting on a vector space $V$ over an algebraically closed field $k$ with basis $\{x_1,...,x_n\}$
        \item[-] $v, w\in V$
    \end{itemize}}
    \KwOut{
    \begin{itemize}
        \item[-] $J$, where $J =  \left\{ \begin{array}{rcl}
    (0) & \mbox{if} & w\in G\cdot v \\ 
    (1) & \mbox{if} & w\notin G\cdot v
    \end{array}\right.$
    \end{itemize}}
    \textbf{inOrbit}$(G,v,w)$ 
     \begin{itemize}
        \item[1.] Write $$g\cdot x_i = \sum_{j=1}^n g_{ij}x_j \text{ for each $i$},\ g = \left( \begin{array}{ccc} g_{11} & \dots & g_{1n} \\
        \vdots & \ddots \\
        g_{n1} & \dots & g_{nn} \end{array} \right)$$
        $$w = \sum_{i=1}^n w_ix_i$$
        $$v = \sum_{i=1}^n v_ix_i \Rightarrow g\cdot v = \sum_{i=1}^n v_i(\sum_{j=1}^n g_{ij}x_j) = \sum_{i=1}^n (\sum_{j=1}^n v_j g_{ji})x_i$$
        \item[2.] $I = \mbox{ideal}(\sum_{j=1}^n v_j g_{ji} - c_i | 1\leq i,j\leq n) + \mbox{ideal}(\det g - 1)$.
        \item[3.] Substitute the coefficients $c_i$ in $I$ by $w_i$.
        \item[4.] Eliminate the variables $g_{ij}$ to obtain the ideal $J$.
    \end{itemize}
 \label{algo:orbitContainment}
\end{breakablealgorithm}

\begin{proof}[Proof of correctness] Elements of $V$ can be written uniquely as $\sum_{i=1}^n c_ix_i$. Consider the polynomial map $G\rightarrow V$ given by $g\mapsto g\cdot v$. Its
graph is defined by the ideal $I = (\sum_{j=1}^n v_j g_{ji} - c_i | 1\leq i,j\leq n)$. 
Since $w$ is in the orbit of $v$, there exists an invertible $g\in G$ such that $w = g\cdot v$. Equivalently, $w_i = \sum_{j=1}^n v_j g_{ji}$ for each $i$, i.e. $w$ is in the vanishing locus of the ideal $I$, so is in the set theoretic projection of $\mathcal{V}(I)$ onto $V$, where $\mathcal{V}(I)$ is the variety cut out by $I$. Substituting the coefficients $c_i$ in $I$ by $w_i$ sets up the system of equations, and eliminating the variables $g_{ij}$ solves for $g$. If $J = (1)$, then 1 is in the ideal obtained from $I$ by substituting the coefficients $c_i$ by $w_i$, so some combination of the equations results in an inconsistent equation, so a solution does not exist; otherwise $J = (0)$ and the equations are consistent, so there exists a solution $g$~\cite[Theorem 11.12, Derksen's Algorithm]{191}.
\end{proof}


\begin{exmp} 
($G = GL(4,\C)$, $V = Sym_3(\C^4)$). Let 
\begin{align*}
v &= x_1x_3x_4 + x_3^3 \\
w &= 125x_2^3 + 5x_1x_2x_3 + 525x_2^2x_3 + 7x_1x_3^2 + 745x_2x_3^2 + 357x_3^3 + 5x_1x_2x_4 +\\
& 75x_2^2x_4 + 8x_1x_3x_4 + 220x_2x_3x_4 + 163x_3^2x_4 + x_1x_4^2 + 15x_2x_4^2 + 23x_3x_4^2 + x_4^3
\end{align*}
where $w$ is obtained from $v$ by setting $x_1 := x_1 + 2x_3$, $x_2 := x_2$, $x_3 := 5x_2 + 7x_3 + x_4$, $x_4 = x_3 + x_4$. The following Macaulay2 code computes $J$~\cite{M2}.

\begin{footnotesize}
\begin{verbatim}
R = QQ[a1,a2,a3,a4,b1,b2,b3,b4,c1,c2,c3,c4,d1,d2,d3,d4][x1,x2,x3,x4];
M = matrix{{a1,a2,a3,a4}, {b1,b2,b3,b4}, {c1,c2,c3,c4}, {d1,d2,d3,d4}};
f1 = 125*x2^3 + 5*x1*x2*x3 + 525*x2^2*x3 + 7*x1*x3^2 + 745*x2*x3^2 + 357*x3^3 
      + 5*x1*x2*x4 + 75*x2^2*x4 + 8*x1*x3*x4 + 220*x2*x3*x4 + 163*x3^2*x4 
      + x1*x4^2 + 15*x2*x4^2 + 23*x3*x4^2 + x4^3;
l1 = a1*x1 + a2*x2 + a3*x3 + a4*x4;
l2 = b1*x1 + b2*x2 + b3*x3 + b4*x4;
l3 = c1*x1 + c2*x2 + c3*x3 + c4*x4;
l4 = d1*x1 + d2*x2 + d3*x3 + d4*x4;
f2 = l1*l3*l4 + l3^3;
g = f1 - f2;
(N,C) = coefficients g;
I = minors(1,C) + ideal(det M - 1);
S = QQ[a1,a2,a3,a4,b1,b2,b3,b4,c1,c2,c3,c4,d1,d2,d3,d4];
I = substitute(I,S);
time J = eliminate(I,{a1,a2,a3,a4,b1,b2,b3,b4,c1,c2,c3,c4,d1,d2,d3,d4});
\end{verbatim}
\end{footnotesize}
This computation gives $J = (0)$ as expected since $w$ is in the orbit of $v$. If we set $w = x_1^3 + x_2^3 + x_3^3$ instead, we get $J = (1)$, which is again expected since $w$ is not in the orbit of $v$ since their singular loci are not projectively equivalent (see Section \ref{singularities}).
\end{exmp}

\subsection{Is $w\in \overline{G\cdot v}$?}
The following algorithm decides whether $w$ is in the orbit closure of $v$.

\bigskip
\begin{breakablealgorithm}
\caption{Orbit closure containment}
\KwIn{
\begin{itemize}
    \item[-] a reductive group $G$ acting on a vector space $V$ over an algebraically closed field $k$ with basis $\{x_1,...,x_n\}$
    \item[-] $v, w\in V$
\end{itemize}}
\KwOut{
\begin{itemize}
    \item[-] $K$, where $K =  \left\{ \begin{array}{rcl}
(0) & \mbox{if} & w\in \overline{G\cdot v} \\ 
(1) & \mbox{if} & w\notin \overline{G\cdot v}
\end{array}\right.$
\end{itemize}}
\textbf{inOrbitClosure}$(G,v,w)$ 
 \begin{itemize}
    \item[1.] Same as Step 1 of Algorithm \ref{algo:orbitContainment}: Orbit Containment.
    \item[2.] $I = \mbox{ideal}(\sum_{j=1}^n v_j g_{ji} - c_i | 1\leq i,j\leq n)$.
    \item[3.] Eliminate the variables $g_{ij}$ to obtain the ideal $J$.
    \item[4.] Substitute the coefficients $c_i$ in $J$ by $w_i$ to obtain the ideal $K$.
\end{itemize}
 \label{algo:orbitClosureContainment}
\end{breakablealgorithm}

\begin{proof}[Proof of correctness] With notation as in Algorithm \ref{algo:orbitContainment}, the orbit of $v$ is given by the set theoretic projection $\pi$ of $\mathcal{V}(I)$ onto $V$. The orbit closure of $v$ is its Zariski (equals to Euclidean by Chevalley's Theorem)
closure, which by~\cite[Theorem 4.2]{191} is given by the elimination ideal $J = I\cap k[c_1, \cdots, c_n]$. Substituting the coefficients $c_i$ by $w_i$ tells whether $w$ is the zero locus of $J$~\cite[Theorem 11.12, Derksen's Algorithm]{191}. 

Note that here we do not impose the condition det $G = 1$ since any matrix (invertible or not) is in the Euclidean closure of the set of invertible matrices. Indeed, if $w = \lim_{\epsilon \to 0} g_n\cdot v_n$ where $g_n$ is not necessarily invertible, since each $g_n = \lim_{\epsilon \to 0}g_{nm}$ where $g_{nm}$ is invertible, then $w = \lim_{\epsilon \to 0} g_{nm_n} \cdot v_n$ (for some $m_n$) can be written as a limit using only invertible matrices. 
\end{proof}


\begin{exmp}
($G = GL(2,\C)$, $V = \mbox{Sym}_3(\C^2)$). Let 
\begin{align*}
v &=  x_1^3 + x_1x_2^2\\
w &= x_1^2x_2
\end{align*}
The following Macaulay2 code computes $K$.

\begin{footnotesize}
\begin{verbatim}
R = QQ[c30,c21,c12,c03,a1,a2,b1,b2][x1,x2,x3,x4];
f1 = c30*x1^3 + c21*x1^2*x2 + c12*x1*x2^2 + c03*x2^3;
l1 = a1*x1 + a2*x2
l2 = b1*x1 + b2*x2
f2 = l1^3 + l1*l2^2;
g = f1 - f2;
(M,C) = coefficients g;
I = minors(1,C);
S = QQ[c30,c21,c12,c03,a1,a2,b1,b2];
I = substitute(I,S);
time J = eliminate(I,{a1,a2,b1,b2});
K = substitute(J,{c30=>0,c21=>1,c12=>0,c03=>0})
\end{verbatim}
\end{footnotesize}
This computation gives $K = (0)$, which means $w$ is in the orbit closure of $v$. To verify this, since $$x_1 x_2 = \lim_{\epsilon \to 0} \left( \frac{x_1^2 - (x_1 - \epsilon x_2)^2}{2\epsilon} \right),$$
$x_1 x_2$ can be written as a limit of polynomials of the form $y_1^2 + y_2^2$, hence we can write $x_1^2x_2$ as a limit of polynomials of the form $y_1^3 + y_1y_2^2$. Since we are working over $\C$, by Chevalley's Theorem, the Euclidean and Zariski closures of the image of the projection coincide, so $w\in \overline{G\cdot v}$. Reversing the roles of $v$ and $w$ gives $K = (1)$, so $v\notin \overline{G\cdot w}$ as expected since the stabilizer of $w$ has codimension lower than that of $v$ (see Section \ref{dimension}). Thus,
$w$ is in the orbit closure, but not in the orbit of $v$.  
\end{exmp}

\section{A Macaulay2 Implementation for Cubic Surfaces}\label{section:m2-code}
In this section, we provide Macaulay2 implementations of the orbit and orbit closure algorithms for cubic surfaces, homogeneous degree 3 polynomials in 4 variables with complex coefficients, written $$v = \sum_{i+j+k+l = 3} c_{ijkl}x_1^ix_2^jx_3^kx_4^l.$$ The projective linear group $G = PGL(4, \C)$ acts on a cubic surface $v$ by $g\cdot v = v\circ g$ for each $g\in G$. We restrict ourselves to inputs with rational coefficients.

\subsection{Orbit} \label{orbit_algo}
The following Macaulay2 code computes whether $f_1$ is in the orbit of $f_2$: $f_1$ is in the orbit closure of $f_2$ iff $J = (0)$.
\begin{footnotesize}
\begin{verbatim}
R = QQ[a1,a2,a3,a4,b1,b2,b3,b4,c1,c2,c3,c4,d1,d2,d3,d4][x1,x2,x3,x4];
M = matrix{{a1,a2,a3,a4}, {b1,b2,b3,b4}, {c1,c2,c3,c4}, {d1,d2,d3,d4}};
f1 = -- plug in f1 here
l1 = a1*x1 + a2*x2 + a3*x3 + a4*x4;
l2 = b1*x1 + b2*x2 + b3*x3 + b4*x4;
l3 = c1*x1 + c2*x2 + c3*x3 + c4*x4;
l4 = d1*x1 + d2*x2 + d3*x3 + d4*x4;
f2 = -- plug in f2 here, replacing x1 by l1, etc.
g = f1 - f2;
(M,C) = coefficients g;
I = minors(1,C);
I = I + (det M - 1);
S = QQ[a1,a2,a3,a4,b1,b2,b3,b4,c1,c2,c3,c4,d1,d2,d3,d4];
I = substitute(I,S);
time J = eliminate(I,{a1,a2,a3,a4,b1,b2,b3,b4,c1,c2,c3,c4,d1,d2,d3,d4});
\end{verbatim}
\end{footnotesize}

\subsection{Orbit Closure} \label{orbit_closure_algo}
The following Macaulay2 code computes whether $f$ is in the orbit closure of $f_2$: $f$ is in the orbit closure of $f_2$ iff $K = (0)$.
\begin{footnotesize}
\begin{verbatim}
R = QQ[c3000,c2100,c1200,c0300,c2010,c1110,c0210,c1020,c0120,c0030,
       c2001,c1101,c1011,c0201,c1002,c0102,c0003,c0021,c0012,c0111,
       a1,a2,a3,a4,b1,b2,b3,b4,c1,c2,c3,c4,d1,d2,d3,d4][x1,x2,x3,x4];
M = matrix {{x1^3, x1^2*x2, x1*x2^2, x2^3, x1^2*x3, x1*x2*x3, x2^2*x3, x1*x3^2, 
       x2*x3^2, x3^3,x1^2*x4,x1*x2*x4,x1*x3*x4,x2^2*x4,x1*x4^2,x2*x4^2,x4^3,
       x3^2*x1,x3*x4^2,x2*x3*x4}}
C = matrix{{c3000,c2100,c1200,c0300,c2010,c1110,c0210,c1020,c0120,c0030,
       c2001,c1101,c1011,c0201,c1002,c0102,c0003,c0021,c0012,c0111}};
f1 = M*transpose(C);
f1 = f1_(0,0);
l1 = a1*x1 + a2*x2 + a3*x3 + a4*x4;
l2 = b1*x1 + b2*x2 + b3*x3 + b4*x4;
l3 = c1*x1 + c2*x2 + c3*x3 + c4*x4;
l4 = d1*x1 + d2*x2 + d3*x3 + d4*x4;
f2 = -- plug in f2 here, replacing x1 by l1, etc.
g = f1 - f2;
(M,C) = coefficients g;
I = minors(1,C);
S = QQ[c3000,c2100,c1200,c0300,c2010,c1110,c0210,c1020,c0120,c0030,
       c2001,c1101,c1011,c0201,c1002,c0102,c0003,c0021,c0012,c0111,
       a1,a2,a3,a4,b1,b2,b3,b4,c1,c2,c3,c4,d1,d2,d3,d4];
I = substitute(I,S);
time J = eliminate(I,{a1,a2,a3,a4,b1,b2,b3,b4,c1,c2,c3,c4,d1,d2,d3,d4});
-- substitute the values of the coefficients by the coefficients of f 
       (replace the 0s)
K = substitute(J,{c3000=>0,c2100=>0,c1200=>0,c0300=>0,c2010=>0,c1110=>0,c0210=>0,
       c1020=>0,c0120=>0,c2001=>0,c1101=>0,c0201=>0,c1002=>0,c0102=>0,c0030=>0,
       c0003=>0,c0021=>0,c0012=>0,c0111=>0})
\end{verbatim}
\end{footnotesize}

\begin{rmk}\label{rmk:justification-m2}
Here we justify the use of Macaulay2 for computations on inputs with rational coefficients. Indeed, the only nontrivial Macaulay2 routines used are \emph{substitute} and \emph{eliminate}. 

By construction, the ideal $I$ has rational generators. The ideal obtained by specializing $I$ is generated by the specialization of each of the rational generators of $I$. Since the values we are substituting are rationals, substitution over the rationals is the same as substitution over the complex field. 

For elimination, we refer to~\cite[Theorems 4.2, 4.5]{191}, restated here as Theorem \ref{thm:math191} for convenience.
\begin{thm}\label{thm:math191}
Let $\pi: k^n \to k^m$ be the projection $(a_1,\ldots,a_n) \mapsto (a_1,\ldots,a_m)$
onto the subspace $k^m$. Let $I\subset k[x_1,...,x_n]$ be an ideal and $V = \mathcal{V}(I)$ its variety in $k^n$, where $k$ is an algebraically closed field. Then its closed image in $k^m$ is the variety $\overline{\pi(V)} = \mathcal{V}(J)$ defined by the elimination ideal $J = I \cap k[x_1,...,x_m]$.

Moreover, if $\mathcal{G}$ is a lexicographic Grobner basis for $I$ in $k[x_1,...,x_n]$, then $J$ has as Grobner basis $\mathcal{G}' = \mathcal{G}\cap k[x_1,...,x_m]$.
\end{thm}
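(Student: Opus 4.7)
The plan is to prove the two assertions separately: first the geometric identity $\overline{\pi(V)}=\mathcal{V}(J)$, then the Grobner basis statement.

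For the geometric part, I would first establish the easy inclusion $\overline{\pi(V)}\subseteq \mathcal{V}(J)$. If $(a_1,\ldots,a_n)\in V$ and $f\in J\subseteq I$, then $f$ vanishes at $(a_1,\ldots,a_n)$, and since $f\in k[x_1,\ldots,x_m]$ this says $f(a_1,\ldots,a_m)=0$, so $\pi(V)\subseteq \mathcal{V}(J)$, and the latter set is Zariski-closed. The reverse inclusion uses Hilbert's Nullstellensatz, which applies because $k$ is algebraically closed: $I(\overline{\pi(V)})=I(\pi(V))$ is the set of $f\in k[x_1,\ldots,x_m]$ vanishing on $\pi(V)$, which coincides with $I(V)\cap k[x_1,\ldots,x_m]=\sqrt{I}\cap k[x_1,\ldots,x_m]$. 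A short manipulation then shows $\sqrt{I\cap k[x_1,\ldots,x_m]}=\sqrt{I}\cap k[x_1,\ldots,x_m]$: one inclusion is immediate, and for the other, if $f\in\sqrt{I}$ with $f\in k[x_1,\ldots,x_m]$ then some power $f^n$ lies in $I\cap k[x_1,\ldots,x_m]=J$. Taking $\mathcal{V}(\cdot)$ of this radical equality yields $\overline{\pi(V)}=\mathcal{V}(\sqrt{J})=\mathcal{V}(J)$.

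For the Grobner basis assertion, I would fix the lex order so that the variables $x_{m+1},\ldots,x_n$ to be eliminated are larger than $x_1,\ldots,x_m$. The key property of this order is that for any nonzero $f\in k[x_1,\ldots,x_n]$, the leading monomial of $f$ lies in $k[x_1,\ldots,x_m]$ if and only if $f$ itself does, because lex comparison first reads off the exponents of the largest variables. Given any $f\in J$, I divide $f$ by $\mathcal{G}$; since $\mathcal{G}$ is a Grobner basis of $I$, the remainder is zero. At each division step the leading term being cancelled lies in $k[x_1,\ldots,x_m]$, so the divisor $g\in\mathcal{G}$ used must also have its leading term in $k[x_1,\ldots,x_m]$, forcing $g\in\mathcal{G}'$. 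This shows simultaneously that $\mathcal{G}'$ generates $J$ as an ideal and that $\{\mathrm{LT}(g):g\in\mathcal{G}'\}$ generates the initial ideal of $J$, so $\mathcal{G}'$ is itself a Grobner basis of $J$.

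The main obstacle is in fact mild: the geometric half relies essentially on the hypothesis that $k$ be algebraically closed (without it one only gets $\overline{\pi(V)}\subseteq\mathcal{V}(J)$, with possible non-closure issues). The Grobner basis half is purely combinatorial and hinges on the lex-order observation above; one just has to track how leading terms propagate through each division step to see that variables from $\{x_{m+1},\ldots,x_n\}$ are never introduced during the reduction of an element already lying in $k[x_1,\ldots,x_m]$.
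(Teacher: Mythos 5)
Your proof is correct. Note, however, that the paper does not prove Theorem~\ref{thm:math191} at all: it is stated as a restatement of~\cite[Theorems 4.2, 4.5]{191} and then used as a black box to justify the use of \emph{eliminate} over $\Q$, so there is no internal argument to compare against. Your argument is the standard textbook one. For the geometric identity, the easy inclusion $\overline{\pi(V)}\subseteq\mathcal{V}(J)$ is immediate, and you correctly reduce the hard inclusion to the ideal-theoretic identity $\sqrt{I\cap k[x_1,\ldots,x_m]}=\sqrt{I}\cap k[x_1,\ldots,x_m]$ via the Nullstellensatz (which is where algebraic closedness is used), then pass back to varieties. For the Gr\"obner basis statement, you correctly supply the hypothesis the theorem leaves implicit, namely that the lex order must rank the eliminated variables $x_{m+1},\ldots,x_n$ above $x_1,\ldots,x_m$, and your key lemma (for such a lex order, $\mathrm{LT}(f)\in k[x_1,\ldots,x_m]$ iff $f\in k[x_1,\ldots,x_m]$) together with tracking the division algorithm step by step establishes both that $\mathcal{G}'$ lies in $J$ and generates its initial ideal. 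One small logical tidying: being a Gr\"obner basis of $J$ is precisely the initial-ideal condition (plus $\mathcal{G}'\subseteq J$); generation of $J$ as an ideal is then a consequence, not an independent fact you need to observe, though what you wrote is not wrong.
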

Since $\C$ is algebraically closed, the elimination ideal cuts out the image of the Zariski closure of the projection of a variety. We want to show that the elimination ideal over $\Q$ is generated by the same generators as the elimination ideal over $\C$. The Macaulay2 implementation of \emph{eliminate} involves computing a Grobner basis and intersecting it with $k[x_1,...,x_m]$. It suffices to show that if $I$ has a set of generators consisting of rational polynomials, then it has a Grobner basis consisting of rational polynomials. Since Buchberger's algorithm, which is used in Macaulay2 to compute Grobner bases, only involves rational operations, the resulting Grobner bases must consist of rational polynomials. So, this set of polynomials generate the elimination ideal over $\Q$ and over $\C$, justifying our use of \emph{eliminate}.

Substitution clearly terminates. Elimination terminates theoretically since Buchberger's algorithm terminates.
\end{rmk}

\section{Some Strategies for Reducing Computation Time}\label{section:tricks}

In both the algorithms for computing orbits and orbit closures, elimination is the only computationally expensive routine.
Here we compile a list of tricks that sometimes helps in obtaining a possibly non-constructive yes/no answer to the orbit and orbit closure problems.

We give some examples to show the significance of elimination in determining whether or not a computation terminates. The following code which shows that $x_1x_3x_4 + x_3^3$ is not in the orbit closure of $x_3x_4^2$, terminates quickly.

\begin{footnotesize}
\begin{verbatim}
R = QQ[c300,c210,c120,c030,c201,c111,c021,c102,c012,c003,
    a1,a3,a4,b1,b3,b4][x1,x3,x4];
M = matrix {{x1^3, x1^2*x3, x1*x3^2, x3^3, x1^2*x4, x1*x3*x4, x3^2*x4,
      x1*x4^2, x3*x4^2, x4^3}}
C = matrix{{c300,c210,c120,c030,c201,c111,c021,c102,c012,c003}};
f1 = M*transpose(C);
f1 = f1_(0,0);
l3 = a1*x1 + a3*x3 + a4*x4;
l4 = b1*x1 + b3*x3 + b4*x4;
f2 = l3^2*l4;
g = f1 - f2;
(M,C) = coefficients g;
I = minors(1,C);
S = QQ[c300,c210,c120,c030,c201,c111,c021,c102,c012,c012,c003,
    a1,a3,a4,b1,b3,b4];
I = substitute(I,S);
time J = eliminate(I,{a1,a3,a4,b1,b3,b4});
K = substitute(J,{c300=>0,c210=>0,c120=>0,c201=>0,
c021=>0,c102=>0,c012=>0,c003=>0})
\end{verbatim}
\end{footnotesize}

However, the following code which tells whether $x_1^2x_2 + x_1x_3x_4 + x_3^3$ is in the orbit closure of $x_1x_2^2 + x_3x_4^2$, was run for a day, got stuck on elimination, and did not terminate. 

\begin{footnotesize}
\begin{verbatim}
R = QQ[c3000,c2100,c1200,c0300,c2010,c1110,c0210,c1020,c0120,c0030,
       c2001,c1101,c1011,c0201,c1002,c0102,c0003,c0021,c0012,c0111,
       a1,a2,a3,a4,b1,b3,c1,c2,c3,c4,d1,d3][x1,x2,x3,x4];
M = matrix {{x1^3, x1^2*x2, x1*x2^2, x2^3, x1^2*x3, x1*x2*x3, x2^2*x3,x1*x3^2, 
       x2*x3^2, x3^3, x1^2*x4,x1*x2*x4,x1*x3*x4,x2^2*x4,x1*x4^2,x2*x4^2,x4^3,
       x3^2*x4,x3*x4^2,x2*x3*x4}}
C = matrix{{c3000,c2100,c1200,c0300,c2010,c1110,c0210,c1020,c0120,c0030,
       c2001,c1101,c1011,c0201,c1002,c0102,c0003,c0021,c0012,c0111}};
f1 = M*transpose(C);
f1 = f1_(0,0);
l1 = c1*x1 + c2*x2 + c3*x3 + c4*x4;
l2 = d1*x1 + d3*x3;
l3 = a1*x1 + a2*x2 + a3*x3 + a4*x4;
l4 = b1*x1 + b3*x3;
f2 = l2^2*l1 + l4^2*l3;
g = f1 - f2;
(M,C) = coefficients g;
I = minors(1,C);
S = QQ[c3000,c2100,c1200,c0300,c2010,c1110,c0210,c1020,c0120,c0030,
       c2001,c1101,c1011,c0201,c1002,c0102,c0003,c0021,c0012,c0111,
       a1,a2,a3,a4,b1,b3,c1,c2,c3,c4,d1,d3];
I = substitute(I,S);
time J = eliminate(I,{a1,a2,a3,a4,b1,b3,c1,c2,c3,c4,d1,d3}); -- proj to 
       polynomials only involving c****'s
K = substitute(J,{c3000=>0,c1200=>0,c0300=>0,c2010=>0,c1110=>0,c0210=>0,c1020=>0,
       c0120=>0,c2001=>0,c1101=>0,c0201=>0,c1002=>0,c0102=>0,c0003=>0,c0021=>0,
       c0012=>0,c0111=>0,c2100=>1,c1011=>1,c0030=>1})
\end{verbatim}
\end{footnotesize}

We remark that the singular loci of $x_1^2x_2 + x_1x_3x_4 + x_3^3$ and $x_1x_2^2 + x_3x_4^2$ are the lines $[0:x_2:0:x_4]$ and $[x_1:0:x_3:0]$ respectively. If $x_1^2x_2 + x_1x_3x_4 + x_3^3$ is the limit of $\{f_n\}$ where $f_n\in G\cdot (x_1x_2^2 + x_3x_4^2)$, then the singular locus of $x_1^2x_2 + x_1x_3x_4 + x_3^3$ must be the limit of the singular loci of the $f_n$s. We will show in Example \ref{exmp:sub-elim-sub-example} that in this particular case, it suffices to consider elements of $G$ such that the second and forth coordinates are contained in the span of $x_1, x_3$. 

\subsection{Sub-elim-sub}\label{sub-elim-sub}
\begin{lemma}\label{lemma:sub-elim-sub}
Let $I$ be an ideal in $k[\vec{x}, \vec{y}, \vec{z}, \vec{w}]$ where $k$ is a field and $\vec{x} = (x_1,...,x_m)$, $\vec{y} = (y_1,...,y_n)$, $\vec{z} = (z_1,...,z_l)$, $\vec{w} = (w_1,...,w_r)$. Let $K$ be the ideal obtained by first eliminating $\vec{z}$, then substituting $\vec{x} = \vec{a}$ and $\vec{y} = \vec{b}$. Let $M$ be the ideal obtained by first substituting $\vec{x} = \vec{a}$, eliminating $\vec{z}$, then substituting $\vec{y} = \vec{b}$. \\
Then, $K\subset M$.
\end{lemma}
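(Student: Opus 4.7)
The plan is a short element chase after unwinding the four steps as ring maps. I would introduce the surjective substitutions $\phi\colon k[\vec{x},\vec{y},\vec{z},\vec{w}] \to k[\vec{y},\vec{z},\vec{w}]$ sending $\vec{x}\mapsto \vec{a}$, and $\psi\colon k[\vec{y},\vec{w}] \to k[\vec{w}]$ sending $\vec{y}\mapsto \vec{b}$, and record elimination of $\vec{z}$ as intersection with the $\vec{z}$-free subring. Since $\phi$ carries $\vec{z}$-free polynomials to $\vec{z}$-free polynomials, its restriction gives a map $k[\vec{x},\vec{y},\vec{w}] \to k[\vec{y},\vec{w}]$, so by construction
\[
K \;=\; \psi\bigl(\phi\bigl(I \cap k[\vec{x},\vec{y},\vec{w}]\bigr)\bigr), \qquad M \;=\; \psi\bigl(\phi(I) \cap k[\vec{y},\vec{w}]\bigr),
\]
and both are ideals of $k[\vec{w}]$, so the containment statement is well-posed.

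The heart of the proof will then be the single intermediate inclusion
\[
\phi\bigl(I \cap k[\vec{x},\vec{y},\vec{w}]\bigr) \;\subseteq\; \phi(I) \cap k[\vec{y},\vec{w}].
\]
I would verify this by picking $f \in I \cap k[\vec{x},\vec{y},\vec{w}]$ and observing two things about $\phi(f)$: it lies in $\phi(I)$ because $f \in I$, and it lies in $k[\vec{y},\vec{w}]$ because $f$ was already free of $\vec{z}$ and $\phi$ only touches the $\vec{x}$-variables. Applying $\psi$ to both sides of this inclusion then yields $K \subseteq M$, which finishes the argument.

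There is no genuine obstacle here—the only real care is keeping straight which polynomial ring each ideal lives in. Conceptually, the lemma records the fact that any polynomial $h \in \phi(I) \cap k[\vec{y},\vec{w}]$ whose pre-substitution form has a nontrivial $\vec{z}$-dependence that cancels only after plugging in $\vec{x}=\vec{a}$ will sit in $M$ but need not sit in $K$. This is precisely why the reverse containment fails in general, and why the \emph{sub-elim-sub} order is useful as a (cheaper) overapproximation of the direct elimination rather than as a replacement for it.
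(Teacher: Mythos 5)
Your proof is correct and is essentially the paper's argument, recast in the cleaner language of ring homomorphisms: the paper picks generators $t_1,\dots,t_u$ of $\mathrm{Elim}(I,\vec{z})$ and checks that each specialized $t_i(\vec{a},\vec{y},\vec{w})$ lies in $\mathrm{Sub}(I,\vec{x})\cap k[\vec{y},\vec{w}]$, which is exactly your intermediate inclusion $\phi\bigl(I\cap k[\vec{x},\vec{y},\vec{w}]\bigr)\subseteq \phi(I)\cap k[\vec{y},\vec{w}]$ verified element-by-element instead of on generators. The only point worth making explicit, which you use implicitly, is that ``substituting into an ideal'' (the ideal generated by the specialized generators) coincides with the image of the ideal under the surjective evaluation map, so your $\psi\circ\phi$ description of $K$ and $M$ matches the paper's generator-based definitions.
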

Informally, this is saying that the ideal obtained by eliminating then substituting is contained in the ideal obtained by first substituting some variables, then eliminating, then substituting the remaining variables. Since $M$ has fewer variables than $K$, performing elimination on $M$ is easier. If $M = 0$, then we know $K = 0$, but if $M \neq 0$, we cannot say anything about $K$. This sometimes helps in the orbit closure problem since elimination is done before substitution, but not in the orbit problem where elimination is already performed last. However, it takes a lot of guessing and trial and error to determine the order of substitution and elimination which would result in a terminating elimination with $M$ as small as possible.

\begin{proof}[Proof of Lemma \ref{lemma:sub-elim-sub}]
Let $I = \left<r_1,...,r_s\right>$. Let the elimination ideal $\mbox{Elim}(I,\vec{z}) = I\cap k[\vec{x}, \vec{y}, \vec{w}]$ be generated by $t_1,...,t_u$. So, $$K = \left<t_1(\vec{a},\vec{b},\vec{w}),...,t_u(\vec{a},\vec{b},\vec{w})\right>.$$ If $f\in K$, write $f = \sum p_it_i(\vec{a},\vec{b},\vec{w})$ where $p_i\in k[\vec{w}]$.  The ideal obtained from $I$ after substituting $\vec{x} = \vec{a}$ is $$\mbox{Sub}(I,\vec{x}) = \left<r_1(\vec{a},\vec{y},\vec{z},\vec{w}),..., r_s(\vec{a},\vec{y},\vec{z},\vec{w})\right>.$$
Since $t_1,...,t_u\in \mbox{Elim}(I,\vec{z}) = I\cap k[\vec{x}, \vec{y}, \vec{w}]$,
$$t_1(\vec{a},\vec{y},\vec{w}),..., t_u(\vec{a},\vec{y},\vec{w})\in k[\vec{y}, \vec{w}]$$
$$\mbox{and } t_1(\vec{a},\vec{y},\vec{w}),..., t_u(\vec{a},\vec{y},\vec{w})\in \mbox{Sub}(I,\vec{x}),$$
so $$t_1(\vec{a},\vec{y},\vec{w}),..., t_u(\vec{a},\vec{y},\vec{w})\in \mbox{Sub}(I,\vec{x})\cap k[\vec{y},\vec{w}] = \mbox{Elim}(\mbox{Sub}(I,\vec{x}), \vec{z}),$$ so $$t_1(\vec{a},\vec{b},\vec{w}),..., t_u(\vec{a},\vec{b},\vec{w})\in \mbox{Sub}(\mbox{Elim}(\mbox{Sub}(I,\vec{x}), \vec{z}), \vec{y}) = M.$$
Thus, $f\in M$.
\end{proof}

The reverse containment does not hold in general. For example, consider $I = \left<y+xz\right>$ in $k[x,y,z]$. Eliminating $z$ from $I$ gives the zero ideal, so the next substitutions do not matter and $K=(0)$. On the other hand, substituting $x=0$ to $I$ gives the ideal $\left<y\right>$. Eliminating $z$ from this ideal has no effect, and substituting $y=1$ results in $M=(1)\nsubseteq K$.\\

The following example shows how to use \emph{sub-elim-sub} to solve the above computation in determining whether $x_1^2x_2 + x_1x_3x_4 + x_3^3$ is in the orbit closure of $x_1x_2^2 + x_3x_4^2$.

\begin{exmp} \label{exmp:sub-elim-sub-example}
The claim allows us to reorder the substitution and elimination process as follows, resulting in a terminating algorithm.

\begin{footnotesize}
\begin{verbatim}
R = QQ[c3000,c2100,c1200,c0300,c2010,c1110,c0210,c1020,c0120,c0030,
       c2001,c1101,c1011,c0201,c1002,c0102,c0003,c0021,c0012,c0111,
       a1,a2,a3,a4,b1,b3,c1,c2,c3,c4,d1,d3][x1,x2,x3,x4];
M = matrix {{x1^3, x1^2*x2, x1*x2^2, x2^3, x1^2*x3, x1*x2*x3, x2^2*x3,x1*x3^2, 
            x2*x3^2, x3^3, x1^2*x4, x1*x2*x4, x1*x3*x4, x2^2*x4, x1*x4^2, x2*x4^2, 
            x4^3, x3^2*x4, x3*x4^2, x2*x3*x4}}
C = matrix{{c3000,c2100,c1200,c0300,c2010,c1110,c0210,c1020,c0120,c0030,c2001,
            c1101,c1011,c0201,c1002,c0102,c0003,c0021,c0012,c0111}};
f1 = M*transpose(C);
f1 = f1_(0,0);
l1 = c1*x1 + c2*x2 + c3*x3 + c4*x4;
l2 = d1*x1 + d3*x3;
l3 = a1*x1 + a2*x2 + a3*x3 + a4*x4;
l4 = b1*x1 + b3*x3;
f2 = l2^2*l1 + l4^2*l3;
g = f1 - f2;
(M,C) = coefficients g;
I = minors(1,C);
S = QQ[c3000,c2100,c1200,c0300,c2010,c1110,c0210,c1020,c0120,c0030,
       c2001,c1101,c1011,c0201,c1002,c0102,c0003,c0021,c0012,c0111,
       a1,a2,a3,a4,b1,b3,c1,c2,c3,c4,d1,d3];
I = substitute(I,S);
J = substitute(I,{c1200=>0,c0300=>0,c0210=>0,c1101=>0,c0201=>0,c0111=>0,c1002=>0,
    c0102=>0,c0012=>0,c0003=>0,c3000=>0,c2010=>0,c1110=>0,c1020=>0,c0021=>0});
time J = eliminate(J,{a1,a2,a3,a4,b1,b3,c1,c2,c3,c4,d1,d3});
K = substitute(J,{c0120=>0,c2001=>0,c2100=>1,c1011=>1,c0030=>1})
\end{verbatim}
\end{footnotesize}
The last subsitution is actually not necessary since $J$ is already $(0)$. Applying the claim to $M_{\text{claim}}=K$ and $K_{\text{claim}}=K_{\text{bad}}=K$ from the previous, non-terminating code, we get $K_{\text{bad}}\subset K = (0)$, so $K_{\text{bad}} = (0)$, showing that $x_1^2x_2 + x_1x_3x_4 + x_3^3$ is in the orbit closure of $x_1x_2^2 + x_3x_4^2$. Note that since $x_1^2x_2 + x_1x_3x_4 + x_3^3$ is in the orbit closure of $x_1x_2^2 + x_3x_4^2$ under elements of $G$ such that the second and fourth coordinates are contained in the span of $x_1, x_3$, it is also contained in the orbit closure of $x_1x_2^2 + x_3x_4^2$ under all elements of $G$. 
\end{exmp}

\subsection{Looking at Singularities}\label{singularities}
A singular point of a homogenous degree $d$ polynomial $f$ in $n$ variables is a point $p\in \mathcal{V}(f)$ at which all partial derivatives of $f$ vanish. It is a point at which the tangent space $T_p\mathcal{V}(f)$ has dimension higher than that of the variety $\mathcal{V}(f)$. 

If $\nabla f = 0$, then $\nabla (g\cdot f) = \nabla (f\circ g) = (\nabla f) \circ Dg = (\nabla f) \circ g = g\cdot \nabla f$ where the second to last equality is due to the fact that a linear function is its own derivative. So, if $g\cdot v = w$, then $g$ must map the singular locus of $v$ to the singular locus of $w$. In particular, if the singular loci of $v$ and $w$ are not projectively equivalent, then $v$ and $w$ do not lie in the same orbit.

In the case of cubic surfaces, the singular locus is cut out by 4 homogeneous degree 2 polynomials. By orthonormalization, every symmetric bilinear form  can be transformed into the form $\epsilon x_1^2 + \dots + \epsilon x_n^2$ where $\epsilon \in \{0, 1\}$, which helps in seeing whether the singular loci are projectively equivalent.

\subsection{Counting Dimensions}\label{dimension}
Clearly, if the dimensions $\dim (G\cdot v)\neq \dim (G\cdot w)$, then the orbits $G\cdot v$ and $G\cdot w$ are distinct, and thus disjoint. For orbit closures, recall that we are working over $\C$. This allows us to use Chevalley's Theorem which says that the Zariski and Euclidean closures coincide. 
\begin{lemma}
Let $G, v, w$ be as before, $G\cdot v \neq G\cdot w$.
\begin{itemize}
    \item [(i)] If $w\in \overline{G\cdot v}$, then $\overline{G\cdot w} \subset \overline{G\cdot v}$.
    \item [(ii)] If $\dim \overline{G\cdot v} \leq \dim \overline{G\cdot w}$, then $w\notin \overline{G\cdot v}$.
\end{itemize}
\end{lemma}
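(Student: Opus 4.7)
The plan is to prove (i) directly, and then deduce (ii) from (i) by contraposition. Both arguments rest on two standard facts about algebraic group actions on affine varieties: the orbit closure $\overline{G\cdot v}$ is itself $G$-invariant, and (because $G$ is connected) each orbit is irreducible and open in its own Zariski closure.

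For (i), the key observation is that for each $g\in G$ the map $x\mapsto g\cdot x$ is a linear automorphism of $V$, hence a Zariski homeomorphism, so it commutes with taking closures: $g\cdot\overline{G\cdot v}=\overline{g\cdot(G\cdot v)}=\overline{G\cdot v}$. Thus $\overline{G\cdot v}$ is $G$-invariant, so if $w\in\overline{G\cdot v}$ then $G\cdot w\subset\overline{G\cdot v}$. Taking closures yields $\overline{G\cdot w}\subset\overline{G\cdot v}$.

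For (ii), I would prove the contrapositive: assuming $w\in\overline{G\cdot v}$ together with $G\cdot v\neq G\cdot w$, I must show $\dim\overline{G\cdot v}>\dim\overline{G\cdot w}$. Because $G$ is an irreducible variety and $G\cdot v$ is the image of the morphism $g\mapsto g\cdot v$, its closure $\overline{G\cdot v}$ is irreducible. A classical result on algebraic group actions states that each orbit is open in its own closure; hence the complement $\overline{G\cdot v}\setminus G\cdot v$ is a proper closed subvariety of an irreducible variety, and so has strictly smaller dimension. Since $G\cdot w\neq G\cdot v$ and $w\in\overline{G\cdot v}$ force $w\notin G\cdot v$, the $G$-invariance from (i) gives $G\cdot w\subset\overline{G\cdot v}\setminus G\cdot v$, whence $\dim\overline{G\cdot w}<\dim\overline{G\cdot v}$, contradicting the hypothesis $\dim\overline{G\cdot v}\le\dim\overline{G\cdot w}$.

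The only nontrivial ingredient, and therefore the main potential obstacle, is the classical fact that orbits of a connected algebraic group are locally closed (equivalently, open in their Zariski closures). I would cite this result rather than re-prove it; for the groups actually used in the paper, namely $GL(n,\C)$ and $PGL(n,\C)$, connectedness and Chevalley's theorem both apply, so the argument goes through without modification.
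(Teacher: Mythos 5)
Your proof is correct, and it takes a genuinely different route from the paper's in both parts. For (i), the paper works in the Euclidean topology: take a sequence $g_n\cdot v\to w$ and translate it by $g$ to see $g\cdot w\in\overline{G\cdot v}$, then close. You instead observe that each $g\in G$ acts by a Zariski homeomorphism, so $\overline{G\cdot v}$ is $G$-invariant; this gives $G\cdot w\subset\overline{G\cdot v}$ directly, with no appeal to sequences or to the Zariski-equals-Euclidean comparison. For (ii), the paper splits into the strict-inequality case (trivial) and the equality case, and in the latter works through a hands-on constructible decomposition $G\cdot v=\bigcup_i(U_i\setminus V_i)$ to bound $\dim(\overline{G\cdot v}\setminus G\cdot v)$. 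You instead package the same content into the classical statement that orbits of an algebraic group are open in their closure, note that $\overline{G\cdot v}$ is irreducible because $G$ is, and conclude that the boundary is a proper closed subvariety of strictly smaller dimension. This handles both the strict and the equality cases at once and is cleaner; the paper's version is self-contained at the cost of effectively re-deriving a fragment of the ``orbits are locally closed'' lemma. The one thing you should make explicit is that the locally closed property needs $G\cdot v$ constructible (Chevalley) plus translation openness, but you cite it correctly and the groups in play, $GL(n,\C)$ and $PGL(n,\C)$, are connected, so there is no gap.
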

\begin{proof}
To show (i), let $\{g_n\cdot v\}\longrightarrow w$. Then for each $g\in G$, $\{gg_n\cdot v\}\longrightarrow g\cdot w$, so $G\cdot w\subset \overline{G\cdot v}$. Since $\overline{G\cdot v}$ is closed, it contains the closure $\overline{G\cdot w}$ of $G\cdot w$.

For (ii), if $\dim \overline{G\cdot v} < \dim \overline{G\cdot w}$, then $\overline{G\cdot v} \not \supset \overline{G\cdot w}$ since a manifold cannot contain a submanifold of higher dimension, thus $w\notin \overline{G\cdot v}$ by $(i)$.

If $\dim \overline{G\cdot v} = \dim \overline{G\cdot w}$, suppose by contradiction that $w\in \overline{G\cdot v}$, so $\overline{G\cdot w} \subset \overline{G\cdot v}$. By Chevalley's Theorem, the image of a polynomial map is a constructible set, a finite union of differences of varieties. Write $G\cdot v = (U_1-V_1)\cup \dots \cup (U_k-V_k) = (U_1-U_1\cap V_1)\cup \dots \cup (U_k-U_k\cap V_k)$ where each $U_i-V_i\neq \emptyset$. By decomposing $U_1,\dots, U_k$ into their irreducible components, we may assume $U_1,\dots, U_k$ are irreducible. Since $U_i\cap V_i$ is a proper closed subset, $U_i - U_i\cap V_i$ is an open dense subset of $U_i$, so $\overline{G\cdot v} = \bigcup_{i=1}^k \overline{U_i}$. Moreover, $\dim U_i\cap V_i\leq \dim U_i - 1$ since $U_i\cap V_i$ is a proper subvariety of the irreducible variety $U_i$, so has empty interior. Since $G\cdot v \neq G\cdot w$, $G\cdot v \cap G\cdot w = \emptyset$, so $\overline{G\cdot w}\subset \overline{G\cdot v} - G\cdot v \subset \bigcup_{i=1}^k (U_i\cap V_i)$ thus $\dim \overline{G\cdot w} \leq \dim \bigcup_{i=1}^k (U_i\cap V_i) \leq \mbox{max}_{i=1}^k (\dim U_i\cap V_i) \leq \mbox{max}_{i=1}^k (\dim U_i) - 1\leq \dim \overline{G\cdot v} - 1 < \dim \overline{G\cdot w}$, a contradiction, hence proving the equality case of $(ii)$.
\end{proof}

We now provide Macaulay2 code to compute the dimension of the orbit closure of cubic surfaces over $\C$. Here, $I$ is the stabilizer of $f$, so by~\cite[Proposition 21.4.3]{TauYu}, the codimension of $I$ is the dimension of the orbit of $f$.

\begin{footnotesize}
\begin{verbatim}
    R = QQ[m_(1,1)..m_(4,4)][x_1..x_4];
    M = mutableMatrix(R,4,4);
    for i to 3 do ( for j to 3 do (
    M_(i,j) = m_(i+1,j+1);
    ); );
    M = matrix M;
    for i from 1 to 4 do (
        y_i = 0;
        for j from 1 to 4 do (
    y_i = y_i + m_(i,j)*x_j;
    );
        );
    f = -- plug in f here with variables x1, ..., x4
    g = substitute(f,{x_1=>y_1,x_2=>y_2,x_3=>y_3,x_4=>y_4});
    F = f-g;
    (Mon,C) = coefficients F;
    I = minors(1,C);
    S = QQ[m_(1,1)..m_(4,4)];
    I = substitute(I,S);
    codim I
\end{verbatim}
\end{footnotesize}


\section{Cubic Surfaces with Infinitely Many Singular Points}\label{section:infinitely-singular}
In this section, we apply some of the techniques in Section \ref{section:tricks} to compute orbit closure containments for cubic surfaces with infinitely many singular points under the action of the projective linear group $PGL(4,\C)$. From~\cite{CGV,Sei},~\cite[\S 97]{Segre} and~\cite[Section 8]{LanTei}, there are only finitely many normal forms of such cubic surfaces, given in Table \ref{table:infinite_singularities_normal_form}. Note that these normal forms are completely characterized by their ranks and singularities, so each normal form corresponds to a distinct orbit.

\begin{table}[ht]
\centering
\begin{tabular}{c|c|c|c|c|c}
No & Label & Rank & Normal form & Singularity & Dim \\ \hline
1. & 1A & 1 & $x_1^3$ & plane $[0:x_2:x_3:x_4]$ & 4 \\ 
2. & 2A & 2 & $x_1^3 + x_2^3$ & line $[0:0:x_3:x_4]$ & 8 \\
3. & 3A & 3 & $x_1^2 x_2$ & plane $[0:x_2:x_3:x_4]$ & 7 \\
4. & 4A & 4 & $x_2^2 x_3 - x_1^3 - x_1^2 x_3$ & line $[0:0:x_3:x_4]$ & 12 \\
5. & 4B & 4 & $x_2^2 x_3 - x_1^3$ & line $[0:0:x_3:x_4]$ & 11 \\
6. & 4C & 4 & $x_1 (x_1^2 + x_2 x_3)$ & union of 2 lines & 11\\ &&&& $[0:0:x_3:x_4]\cup [0:x_2:0:x_4]$\\
7. & 4D & 4 & $x_1 x_2 x_3$ & union of 3 lines & 10 \\
&&&& $[0:0:x_3:x_4]\cup [0:x_2:0:x_4]$\\
&&&& $\cup [x_1:0:0:x_4]$\\
8. & 5A & 5 & $x_2 (x_1^2 + x_2 x_3)$ & line $[0:0:x_3:x_4]$ & 10 \\
9. & 6A & 6 & $x_1 (x_1^2 + x_2^2 + x_3^2 + x_4^2)$ & $x_1=0$, $x_2^2 + x_3^2 + x_4^2 = 0$ & 13 \\
10. & 6B & 6 & $x_1(x_2^2 + x_3^2 + x_4^2)$ & $[1:0:0:0]\cup \{x_2^2 + x_3^2 + x_4^2 = 0\}$ & 12 \\
11. & 6C & 6 & $x_1 x_2^2 + x_3 x_4^2$ & line $[x_1:0:x_3:0]$ & 14 \\
12. & 7A & 7 & $x_1 (x_1 x_2 + x_3^2 + x_4^2)$ & $x_1=0$, $x_3^2 + x_4^2 = 0$ & 12 \\
13. & 7B & 7 & $x_1^2 x_2 + x_1 x_3 x_4 + x_3^3$ & line $[0:x_2:0:x_4]$ & 13
\end{tabular}
\caption{Normal forms of cubic surfaces with infinitely many singular points. Note: "Dim" denotes the dimension of the orbit closure.}
\label{table:infinite_singularities_normal_form}
\end{table}

We give a partial classification of the orbit closure containments for these normal forms in Table \ref{table:infinite_singularities_orbit_closure}. We will use the notation (row, column): a \cmark means that the orbit closure of ``row'' is contained in that of ``column'', while a \xmark indicates otherwise. 

\begin{table}[ht]
\centering
\begin{tabular}{c|c|c|c|c|c|c|c|c|c|c|c|c|c}
Label & 1A & 2A & 3A & 4A & 4B & 4C & 4D & 5A & 6A & 6B & 6C & 7A & 7B \\ \hline
1A & \cmark & \cmark & \cmark & \cmark & \cmark & \cmark & \cmark & \cmark & \cmark & \cmark & \cmark & \cmark & \cmark \\ \hline
2A & \gxmark & \cmark & \gxmark & \cmark & \cmark & \cmark & \cmark & \cmark & \cmark & \cmark & \cmark & \cmark & \cmark \\ \hline
3A & \gxmark & \cmark & \cmark & \cmark & \cmark & \cmark & \cmark & \cmark & \cmark & \cmark & \cmark & \cmark & \cmark \\ \hline
4A & \gxmark & \gxmark & \gxmark & \cmark & \gxmark & \gxmark & \gxmark & \gxmark & & \gxmark & \cmark & \gxmark & \\ \hline
4B & \gxmark & \gxmark & \gxmark & \cmark & \cmark & \gxmark & \gxmark & \gxmark &&& \cmark && \cmark \\ \hline
4C & \gxmark & \gxmark & \gxmark & \cmark & \gxmark & \cmark & \gxmark & \gxmark & \cmark & \cmark & \cmark & \cmark & \cmark \\ \hline
4D & \gxmark & \gxmark & \gxmark & \cmark & & \cmark & \cmark & \gxmark & \cmark & \cmark & \cmark & \cmark & \cmark \\ \hline
5A & \gxmark & \gxmark & \gxmark & \cmark & \cmark & \cmark & \gxmark & \cmark & \cmark & \cmark & \cmark & \cmark & \cmark \\ \hline
6A & \gxmark & \gxmark & \gxmark & \gxmark & \gxmark & \gxmark & \gxmark & \gxmark & \cmark & \gxmark & & \gxmark & \gxmark \\ \hline
6B & \gxmark & \gxmark & \gxmark & \gxmark & \gxmark & \gxmark & \gxmark & \gxmark & \cmark & \cmark & & \gxmark & \\ \hline
6C & \gxmark & \gxmark & \gxmark & \gxmark & \gxmark & \gxmark & \gxmark & \gxmark & \gxmark & \gxmark & \cmark & \gxmark & \gxmark \\ \hline
7A & \gxmark & \gxmark & \gxmark & \gxmark & \gxmark & \gxmark & \gxmark & \gxmark & \cmark & \gxmark & & \cmark & \\ \hline
7B & \gxmark & \gxmark & \gxmark & \gxmark & \gxmark & \gxmark & \gxmark & \gxmark & \gxmark & \gxmark & \cmark & \gxmark & \cmark 
\end{tabular}
\caption{Normal forms of cubic surfaces with infinitely many singular points}
\label{table:infinite_singularities_orbit_closure}
\end{table}

\begin{itemize}
    \item [Justification:]
    A \textcolor{gray}{grey} entry indicates that the result is due to a dimension argument. 
    
    The diagonal entries are trivial.
    \item [Row 1:]
    (1A, 1A), (1A, 2A), (1A, 4A), (1A, 4B), (1A, 4C), (1A, 6A), (1A, 7B) are clear.
    
    (1A, 3A): $x_1^3 = \lim_{\epsilon \to 0} x_1^2(x_1+\epsilon x_2)$. 
    
    (1A, 4D): $x_1^3 = \lim_{\epsilon \to 0} x_1(x_1 + \epsilon x_2)(x_1 + \epsilon x_3)$. 
    
    (3A, 5A), (3A, 6B), (3A, 6C), (3A, 7A) imply (1A, 5A), (1A, 6B), (1A, 6C), (1A, 7A) respectively.
    
    \item [Row 2:]
    (2A, 4B): $x_1^3 + x_2^3 = \lim_{\epsilon \to 0} x_2^2(x_2 + \epsilon x_3) - (-x_1)^3$.
    
    (2A, 4A): (2A, 4B) and (4B, 4A) $\Rightarrow$ (2A, 4A).
    
    (2A, 4C), (2A, 4D), (2A, 5A): Macaulay2 computations.
    \begin{footnotesize}
    \begin{verbatim}
        R = QQ[c3000,c2100,c1200,c0300,c2010,c1110,c0210,c1020,c0120,c0030,
            a1,a2,a3,b1,b2,b3,c1,c2,c3][x1,x2,x3];
        M = matrix {{x1^3, x1^2*x2, x1*x2^2, x2^3, x1^2*x3, x1*x2*x3, x2^2*x3, 
            x1*x3^2, x2*x3^2, x3^3}}
        C = matrix{{c3000,c2100,c1200,c0300,c2010,c1110,c0210,c1020,c0120,c0030}};
        f1 = M*transpose(C);
        f1 = f1_(0,0);
        l1 = a1*x1 + a2*x2 + a3*x3;
        l2 = b1*x1 + b2*x2 + b3*x3;
        l3 = c1*x1 + c2*x2 + c3*x3;
        f2 = l1*(l1^2 + l2*l3); -- l1*l2*l3, l2*(l1^2 + l2*l3) respectively
        g = f1 - f2;
        (M,C) = coefficients g;
        I = minors(1,C);
        S = QQ[c3000,c2100,c1200,c0300,c2010,c1110,c0210,c1020,c0120,c0030,
            a1,a2,a3,b1,b2,b3,c1,c2,c3];
        I = substitute(I,S);
        J = substitute(I,{c3000=>1,c2100=>0,c1200=>0,c0300=>1,c2010=>0,c1110=>0,
            c0210=>0,c1020=>0,c0120=>0,c0030=>0});
        time K = eliminate(J,{a1,a2,a3,b1,b2,b3,c1,c2,c3});
    \end{verbatim}
    \end{footnotesize}
    (2A, 6B), (2A, 7A): We use the same Macaulay2 code as (2A, 4C), replacing f2 by $l_1(l_2^2 + l_3^2),\ l_1(l_1l_2 + l_3^2)$ respectively. Since the former is in the orbit closure of 6B and the latter in that of 7A, we get (2A, 6B), (2A, 7A).
    
    (2A, 6A): (2A, 6B) and (6B, 6A) $\Rightarrow$ (2A, 6A).
    
    (2A, 7B): $x_1^3 + x_2^3 = \lim_{\epsilon \to 0} x_1^2(x_1 + \epsilon x_2) + x_1(x_2 + \epsilon x_3)\epsilon x_4 + (x_2 + \epsilon x_3)^3$.
    
    (2A, 6C): (2A, 7B) and (7B, 6C) $\Rightarrow$ (2A, 6C).
    
    \item [Row 3:]
    (3A, 2A): $\lim_{\epsilon \to 0} \frac{x_1^3 + (-x_1 + \epsilon x_2)^3}{3\epsilon} = x_1^2x_2.$
    
    (3A, 4A), (3A, 4B), (3A, 4C), (3A, 4D), (3A, 5A), (3A, 6A), (3A, 6B), (3A, 6C), (3A, 7A), (3A, 7B): Inherit from 2A.
    
    \item [Row 4:]
    (4A, 6C): $\lim_{\epsilon \to 0}x_3x_2^2 + (-x_1-x_3)(x_1+\epsilon x_4)^2 = x_2^2x_3 - x_1^3 - x_1^2x_3.$
    
    \item [Row 5:]
    (4B, 4A): $\lim_{\epsilon \to 0}(\frac{x_2}{\epsilon})^2(\epsilon^2x_3) - x_1^3 - x_1^2(\epsilon^2 x_3) = x_2^2x_3 - x_1^3.$
    
    (4B, 6C): Inherit from 4A.
    
    (4B, 7B): $\lim_{\epsilon \to 0}x_2^2x_3 + x_2(-x_1)(\epsilon x_4) + (-x_1)^3 = x_2^2x_3 - x_1^3.$
    
    \item [Row 6:]
    (4C, 4A): $\lim_{\epsilon \to 0} -(-x_1)^3 + \frac{x_3}{2\epsilon}({-(-x_1)^2 + (x_1 + \epsilon x_2)^2)} = x_1(x_1^2 + x_2x_3).$
    
    (4C, 6B): $\lim_{\epsilon \to 0}x_1\left(\frac{x_2^2 + (i(x_2-\epsilon x_3))^2}{2\epsilon} + (x_1 + \epsilon x_4)^2 \right) = x_1x_2x_3 + x_1^3.$
    
    (4C, 6A): Inherit from 6B.
    
    (4C, 7A): Let $y_1 = x_1,\ y_2 = x_1 + \epsilon x_4,\ y_3 = \frac{x_2}{\sqrt{2\epsilon}},\ y_4 = \frac{i(x_2 - \epsilon x_3)}{\sqrt{2\epsilon}}$. Then, $\lim_{\epsilon \to 0}y_1(y_1y_2 + y_3^2 + y_4^2) = x_1(x_1^2 + x_2x_3).$
    
    (4C, 7B): $\lim_{\epsilon \to 0} x_1^2(\epsilon x_4) + x_3x_1x_2 + x_1^3 = x_1x_2x_3 + x_1^3.$
    
    (4C, 6C): Inherit from 7B.
    
    \item [Row 7:]
    (4D, 4A): $\lim_{\epsilon \to 0} ((\epsilon(x_1 + \epsilon x_2))^2\frac{x_3}{2\epsilon^3} - (\epsilon x_1)^3 - (\epsilon x_1)^2\frac{x_3}{2\epsilon^3} = x_1x_2x_3.$
    
    (4D, 4C): $\lim_{\epsilon \to 0}\frac{(\epsilon x_1)^3 + \epsilon x_1x_2x_3}{\epsilon} = x_1x_2x_3.$
    
    (4D, 6B): $\lim_{\epsilon \to 0} x_1\left(\frac{x_2^2 + (i(x_2 - \epsilon x_3))^2}{2\epsilon} + \epsilon x_4^2\right) = x_1x_2x_3.$
    
    (4D, 6A): Inherit from 6B.
    
    (4D, 7A): $\lim_{\epsilon \to 0} x_1\left(x_1(\epsilon x_4) + \frac{x_2^2 + (i(x_2 - \epsilon x_3))^2}{2\epsilon}\right) = x_1x_2x_3.$
    
    (4D, 7B): $\lim_{\epsilon \to 0} \frac{x_1^2(\epsilon^2 x_4) + x_1(\epsilon x_3)x_2 + (\epsilon x_3)^3}{\epsilon} = x_1x_2x_3.$
    
    (4D, 6C): Inherit from 7B.
    
    \item [Row 8:]
    (5A, 4B): Macaulay2 computation.
    \begin{footnotesize}
    \begin{verbatim}
        R = QQ[c3000,c2100,c1200,c0300,c2010,c1110,c0210,c1020,c0120,c0030,
               a1,a2,a3,b1,b2,b3,c1,c2,c3][x1,x2,x3];
        M = matrix {{x1^3, x1^2*x2, x1*x2^2, x2^3, x1^2*x3, x1*x2*x3, x2^2*x3, x1*x3^2, 
               x2*x3^2, x3^3}}
        C = matrix{{c3000,c2100,c1200,c0300,c2010,c1110,c0210,c1020,c0120,c0030}};
        f1 = M*transpose(C);
        f1 = f1_(0,0);
        l1 = a1*x1 + a2*x2 + a3*x3;
        l2 = b1*x1 + b2*x2 + b3*x3;
        l3 = c1*x1 + c2*x2 + c3*x3;
        f2 = l2^2*l3 - l1^3
        g = f1 - f2;
        (M,C) = coefficients g;
        I = minors(1,C);
        S = QQ[c3000,c2100,c1200,c0300,c2010,c1110,c0210,c1020,c0120,c0030,
               a1,a2,a3,b1,b2,b3,c1,c2,c3];
        I = substitute(I,S);
        J = substitute(I,{c3000=>0,c1200=>0,c0300=>0,c2010=>0});
        time K = eliminate(J,{a1,a2,a3,b1,b2,b3,c1,c2,c3});
        K = substitute(K,{c2100=>1,c0210=>1,c1110=>0,c1020=>0,c0120=>0,c0030=>0})
    \end{verbatim}
    \end{footnotesize}
    
    (5A, 4A): Inherit from 4B.
    
    (5A, 4C): Macaulay2 computation.
    \begin{footnotesize}
    \begin{verbatim}
        R = QQ[c3000,c2100,c1200,c0300,c2010,c1110,c0210,c1020,c0120,c0030,
               a1,a2,a3,b1,b2,b3,c1,c2,c3][x1,x2,x3];
        M = matrix {{x1^3, x1^2*x2, x1*x2^2, x2^3, x1^2*x3, x1*x2*x3, x2^2*x3, 
               x1*x3^2, x2*x3^2, x3^3}}
        C = matrix{{c3000,c2100,c1200,c0300,c2010,c1110,c0210,c1020,c0120,c0030}};
        f1 = M*transpose(C);
        f1 = f1_(0,0);
        l1 = a1*x1 + a2*x2 + a3*x3;
        l2 = b1*x1 + b2*x2 + b3*x3;
        l3 = c1*x1 + c2*x2 + c3*x3;
        f2 = l1^3 + l1*l2*l3
        g = f1 - f2;
        (M,C) = coefficients g;
        I = minors(1,C);
        S = QQ[c3000,c2100,c1200,c0300,c2010,c1110,c0210,c1020,c0120,c0030,
               a1,a2,a3,b1,b2,b3,c1,c2,c3];
        I = substitute(I,S);
        J = substitute(I,{c3000=>0,c1200=>0,c0300=>0,c2010=>0,c1110=>0,c1020=>0});
        time K = eliminate(J,{a1,a2,a3,b1,b2,b3,c1,c2,c3});
        K = substitute(K,{c2100=>1,c0210=>1,c0120=>0,c0030=>0})
    \end{verbatim}
    \end{footnotesize}
    
    (5A, 6B): $\lim_{\epsilon \to 0} x_2\left(x_1^2 + \frac{x_3^2 + (i(x_3 - \epsilon x_2))^2}{2\epsilon}\right) = x_2(x_1^2 + x_2x_3).$
    
    (5A, 6A): Inherit from 6B.
    
    (5A, 7A): $\lim_{\epsilon \to 0} x_2(x_2x_3 + x_1^2 + (\epsilon x_4)^2) = x_2(x_1^2 + x_2x_3).$
    
    (5A, 7B): Inherit from 4B.
    
    (5A, 6C): Inherit from 7B.
    
    \item [Row 9:]
    (6B, 6A): $\lim_{\epsilon \to 0} \frac{\epsilon x_1((\epsilon x_1)^2 + x_2^2 + x_3^2 + x_4^2)}{\epsilon} = x_1(x_2^2 + x_3^2 + x_4^2).$
    
    \item [Row 10:]
    \item [Row 12:]
    (7A, 6A): Let $y_1 = \frac{x_1}{\sqrt{2}\sqrt[3]{\epsilon}},\ y_2 = \frac{i(x_1 - \epsilon x_2)}{\sqrt{2}\sqrt[3]{\epsilon}},\ y_3 = \sqrt[4]{2}\sqrt[6]{\epsilon}x_3,\ y_4 = \sqrt[4]{2}\sqrt[6]{\epsilon}x_4$. 
    Then $\lim_{\epsilon \to 0}y_1(y_1^2 + y_2^2 + y_3^2 + y_4^2) = x_1(x_1x_2 + x_3^2 + x_4^2).$
    
    \item [Row 13:]
    (7B, 6C): This is Example \ref{exmp:sub-elim-sub-example}.
\end{itemize}

There are still 11 ordered pairs (left blank) for which we are unable to compute the orbit closure containment at this time. For these cases, either there are four variables involved causing elimination to not terminate, or the dimensions of the orbits of the two cubic surfaces considered differ only by one, so the guessing in \emph{sub-elim-sub} is hard. 

Nevertheless, we obtain a stronger statement of~\cite[Corollary 3.4]{AE}.

\medskip
\noindent \textbf{``Corollary 3.4''} A general cubic surface with infinitely many singular points has rank six.

\begin{proof}
There are finitely many orbits of cubic surfaces with infinitely many singular points. All the normal forms of cubic surfaces with infinitely many singular points are contained in the orbit closures of either $x_1(x_1^2 + x_2^2 + x_3^2 + x_4^2)$ or $x_1x_2^2 + x_3x_4^2$, which have rank six. 
\end{proof}

Out of the 11 cases for which we are unable to decide the orbit closure containments, only 1 case involves 3 variables while the other 10 involve 4 variables. As mentioned in Remark \ref{rmk:justification-m2}, the Macaulay2 implementation of \emph{eliminate} involves computing a Gröbner basis, which is done using Buchberger's algorithm. Buchberger's algorithm is slow in general, especially when the number of variables is large. 

We would also like to note that in~\cite[Theorem 2.12]{Popov}, the author provides an algorithm which reduces the orbit closure problem to checking the consistency of a system of linear equations. With notation as in~\cite{Popov}, for the case of cubic surfaces, after reduction to the conic case~\cite[2.7]{Popov}, we have $V = $ the 20-dimensional vector space of cubic surfaces, $G = GL(4, \C)$, and $\rho: G \to \mbox{Mat}_{20,20}(\C)$ is the matrix representation of $G$, i.e. $\rho(g)$ is the matrix of the linear transformation $V\to V$, $v\mapsto g\cdot v$ with respect to the monomial basis $\{x_1^ix_2^jx_3^kx_4^l \vert i+j+k+l = 3\}$.

The algorithm takes as input $d = \deg \rho (G)$. According to an (unpublished) computation done by Hanieh Keneshlou and Khazhgali Kozhasov, this degree is very large: 4306472. Since step 2 of the algorithm~\cite[2.11]{Popov} involves a generic polynomial of degree $2d-2$, for the case of cubic surfaces, it does not make the orbit closure problem easier.

As Bernd Sturmfels suggested, another approach for future projects to tackle this problem would be numerical algebraic geometry (see e.g.~\cite{NumAG} or~\cite{Bertini}).  This approach has been recently used to great success in such papers as~\cite{3264}.

\bigskip
\noindent \textbf{Acknowledgements} 
We would like to thank Ralph Morrison for helpful guidance and suggestions throughout the project. We would also like to thank Anna Seigal for helpful conversation and for providing the codes for orbit closure and stabilizer from a previous project. We are also thankful to Hanieh Keneshlou and Khazhgali Kozhasov for computing the degree 4306472. Last but not least, we are really grateful to Bernd Sturmfels for helpful discussions and for the opportunity to work on this project.

\end{document}